\numberwithin{equation}{section}
\numberwithin{figure}{section}
\theoremstyle{plain}
\newtheorem{thm}{\protect\theoremname}
  \theoremstyle{plain}
  \newtheorem{lem}[thm]{\protect\lemmaname}
  \theoremstyle{remark}
  \newtheorem{rem}[thm]{\protect\remarkname}
  \theoremstyle{plain}
  \newtheorem{prop}[thm]{\protect\propositionname}
  \providecommand{\lemmaname}{Lemma}
  \providecommand{\propositionname}{Proposition}
  \providecommand{\remarkname}{Remark}
\providecommand{\theoremname}{Theorem}
\begin{document}

\title[Quantum symmetries on noncommutative complex spheres]{Quantum symmetries on noncommutative complex spheres with partial
commutation relations}

\author{Simeng Wang}

\address{Saarland University, Fachbereich Mathematik, Postfach 151150, 66041
Saarbrücken, Germany}

\email{wang@math.uni-sb.de}

\date{\today}
\begin{abstract}
We introduce the notion of noncommutative complex spheres with partial
commutation relations for the coordinates. We compute the corresponding
quantum symmetry groups of these spheres, and this yields new quantum
unitary groups with partial commutation relations. We also discuss
some geometric aspects of the quantum orthogonal groups associated
with the mixture of classical and free independence discovered by
Speicher and Weber. We show that these quantum groups are quantum
symmetry groups on some quantum spaces of spherical vectors with partial
commutation relations.
\end{abstract}

\maketitle

\section{Introduction}

This paper introduces a new class of noncommutative spheres and discusses
the associated quantum symmetry groups. The quantization of classical
spheres was initiated in the work of Podl\'{e}s \cite{podles87sphere,podles95spheresym}.
The theory of various noncommutative spheres and their quantum symmetries
has been then remarkably developed in the past decades (see for example
\cite{conneslandi01sphere,connesduboisviolette02sphere,goswami09qsymsphere,banicagoswami10qsymsphere,banica15complexsphere,banica16spheredual,banica17spherenote}
and references therein).

In a recent work \cite{speicherweber16epsilinqg}, Speicher and Weber
introduced a new class of noncommutative spheres with partial commutation
relations, and computed the corresponding quantum symmetry group.
This also leads to new versions of quantum orthogonal groups which
do not interpolate between the classical and universal versions of
orthogonal groups. 

In this note we will continue the project proposed by \cite{speicherweber16epsilinqg}.
We will discuss the complex versions of noncommutative spheres with
partial commutation relations. We will compute the quantum symmetry
groups of these objects. Compared to the real case studied in \cite{speicherweber16epsilinqg},
the complex case involves more subtlety such as the mixture of normal
and non-normal generators. Similarly as in the real case, we obtain
new examples of quantum unitary groups with partial commutation relations.
We refer to Section 2 for all details.

On the other hand, we also answer some unsolved problems in \cite{speicherweber16epsilinqg}
regarding the real case. In \cite{speicherweber16epsilinqg}, by virtue
of the mixture of independences in quantum probability, some quantum
orthogonal groups with partial commutation relations are introduced.
However the geometric aspects of these quantum groups were not clear
in their work. In this note we will construct some quantum tuples
of noncommutative spheres so that the corresponding quantum symmetry
groups are exactly those studied in \cite{speicherweber16epsilinqg}.
The result will be given in Section 3.

\section{The noncommutative complex spheres and quantum symmetries}

In this section we let $\varepsilon=(\varepsilon_{ij})_{i,j\in\{1,\ldots,n\}}$
and $\eta=(\eta_{kl})_{k,l\in\{1,\ldots,n\}}$ be two symmetric matrices
with $\varepsilon_{ij}\in\{0,1\}$, $\varepsilon_{ii}=0$ and $\eta_{kl}\in\{0,1\}$.

\subsection{Noncommutative complex $(\varepsilon,\eta)$-spheres}

We consider the universal C{*}-algebra 
\[
C^{*}(x_{1},\ldots,x_{n}\mid\sum_{i=1}^{n}x_{i}^{*}x_{i}=\sum_{i=1}^{n}x_{i}x_{i}^{*}=1,x_{i}x_{j}=x_{j}x_{i}\text{ if }\varepsilon_{ij}=1,x_{i}^{*}x_{j}=x_{j}x_{i}^{*}\text{ if }\eta_{ij}=1).
\]
As an intuitive notation, we denote the above C{*}-algebra by $C(S_{\mathbb{C},\varepsilon,\eta}^{n-1})$
and we say that $S_{\mathbb{C},\varepsilon,\eta}^{n-1}$ is a \emph{noncommutative
complex $(\varepsilon,\eta)$-sphere}. Note that if all non-diagonal
entries of $\varepsilon$ and all entries of $\eta$ are $1$, then
we obtain the algebra $C(S_{\mathbb{C}}^{n-1})$ of continuous functions
over the complex sphere $S_{\mathbb{C}}^{n-1}\subset\mathbb{C}^{n}$.
If all entries of $\varepsilon$ and $\eta$ are $0$, we get the
Banica's free version of complex spheres in \cite{banica15complexsphere}.

Compared to the real spheres studied in \cite{speicherweber16epsilinqg},
we consider two matrices $\varepsilon$ and $\eta$ rather than one
in order to include the case where the generators $x_{i}$ are non-normal.
Note that the diagonal entries of $\eta$ are related to the normality
of the generators. We give the following remarks.
\begin{lem}
\label{lem:normal}\emph{(1) }Let $1\leq i\leq n$. If $x_{i}$ is
normal (in other words if $\eta_{ii}=1$), then for any $j$, 
\[
x_{i}x_{j}=x_{j}x_{i}\quad\text{iff}\quad x_{i}^{*}x_{j}=x_{j}x_{i}^{*}.
\]

\emph{(2)} Let $1\leq i\leq n$. If $\varepsilon_{ij}=\eta_{ij}=1$
for all $j\neq i$ with $\eta_{jj}=0$, then $x_{i}$ is normal.\end{lem}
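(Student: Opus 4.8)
The plan is to treat the two parts separately: part (1) is an instance of the Fuglede–Putnam theorem, while part (2) reduces, via the sphere relations, to the elementary fact that a self-commutator can never be a nonzero scalar.

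For part (1), I would first note that $\eta_{ii}=1$ imposes the relation $x_i^*x_i=x_ix_i^*$, so $x_i$ is genuinely normal, and both $x_i$ and $x_i^*$ are normal elements. Passing to a faithful representation on a Hilbert space (or invoking the C*-algebraic form of Fuglede's theorem directly), I would apply Fuglede–Putnam to $x_i$: if $x_ix_j=x_jx_i$, then taking $N=x_i$ and $T=x_j$ the hypothesis $TN=NT$ yields $TN^*=N^*T$, i.e. $x_jx_i^*=x_i^*x_j$. Conversely, applying the theorem to the normal element $x_i^*$ (again with $T=x_j$) turns $x_i^*x_j=x_jx_i^*$ back into $x_ix_j=x_jx_i$. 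This direction is routine once Fuglede is available.

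For part (2), set $d_k:=x_k^*x_k-x_kx_k^*$ for each $k$. Each $d_k$ is self-adjoint, $d_k=0$ exactly when $\eta_{kk}=1$, and the two sphere relations give $\sum_k d_k=\bigl(\sum_k x_k^*x_k\bigr)-\bigl(\sum_k x_kx_k^*\bigr)=1-1=0$. The hypothesis asserts that for every $j\neq i$ with $\eta_{jj}=0$ one has $\varepsilon_{ij}=\eta_{ij}=1$; the relation $x_ix_j=x_jx_i$ makes $x_i$ commute with $x_j$, and taking adjoints in $x_i^*x_j=x_jx_i^*$ gives $x_jx_i^*\,{}^{*}$, namely $x_j^*x_i=x_ix_j^*$, so $x_i$ also commutes with $x_j^*$. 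Hence $x_i$ commutes with $x_j^*x_j$ and $x_jx_j^*$, and therefore with each such $d_j$. Writing $d_i=-\sum_{k\neq i}d_k=-\sum_{k\neq i,\,\eta_{kk}=0}d_k$ (the indices with $\eta_{kk}=1$ drop out), I conclude that $x_i$ commutes with $d_i$; since $d_i$ is self-adjoint, it also commutes with $x_i^*$, so $d_i$ lies in the centre of $B:=C^*(x_i)$.

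It remains to upgrade centrality of $d_i$ to $d_i=0$, and this is the step I expect to be the main obstacle. The idea is to pass to irreducible representations of $B$: for an irreducible $\rho\colon B\to B(H_\rho)$, the self-adjoint central element $\rho(d_i)$ commutes with all of $\rho(B)$ and hence, by Schur's lemma, equals a real scalar, $\rho(d_i)=\lambda I_{H_\rho}$. But $\rho(d_i)=\rho(x_i)^*\rho(x_i)-\rho(x_i)\rho(x_i)^*$ is a self-commutator, and a self-commutator is never a nonzero scalar: assuming $\lambda\geq 0$ we have $\rho(x_i)^*\rho(x_i)=\rho(x_i)\rho(x_i)^*+\lambda I_{H_\rho}$ between positive operators, and comparing norms via the C*-identity gives $\|\rho(x_i)\|^2=\|\rho(x_i)\|^2+\lambda$, forcing $\lambda=0$ (the case $\lambda<0$ follows by replacing $\rho(x_i)$ with $\rho(x_i)^*$). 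Thus $\rho(d_i)=0$ for every irreducible representation of $B$; since these separate points of $B$, we get $d_i=0$, i.e. $x_i$ is normal. I would isolate the observation that a self-commutator cannot be a nonzero scalar as the crux, everything else being bookkeeping with the commutation relations.
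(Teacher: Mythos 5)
Your proof is correct, and for part (1) it is identical to the paper's (a direct appeal to Fuglede--Putnam, applied once to $x_i$ and once to $x_i^*$ to get both implications). For part (2) the reduction is the same as the paper's: both arguments use $\sum_k(x_k^*x_k-x_kx_k^*)=0$ together with the hypothesis that $x_i$ commutes with $x_j$ and $x_j^*$ for every $j\neq i$ with $\eta_{jj}=0$ (and the vanishing of the self-commutators of the normal generators) to conclude that $x_i$ commutes with its own self-commutator $d_i$. The only divergence is the final step: the paper simply cites Putnam's theorem (if $x$ commutes with $x^*x-xx^*$ then $x$ is normal, referencing Sakai), whereas you prove the special case you need from scratch --- observing that $d_i$ is then central in $C^*(x_i)$, reducing by Schur's lemma to the statement that a self-commutator is never a nonzero scalar, and settling that by the spectral identity $\|B+\lambda I\|=\|B\|+\lambda$ for $B\geq 0$, $\lambda\geq 0$, combined with the C*-identity. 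Your version buys self-containedness at the cost of a few extra lines; the paper's is shorter but rests on an external reference. Both are sound.
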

\begin{proof}
The assertion (1) follows immediately from the Fuglede theorem (see
for example \cite[12.16]{rudin91functbook}). Let us prove the assertion
(2). Without loss of generality let us assume that $\eta_{ii}=0$
for $1\leq i\leq k$, $\eta_{ii}=1$ for $k+1\leq i\leq n$, and $\varepsilon_{1j}=\eta_{1j}=1$
for $2\leq j\leq k$, and show that $x_{1}$ is normal.

Note that 
\[
\sum_{i=1}^{k}x_{i}^{*}x_{i}=1-\sum_{i=k+1}^{n}x_{i}^{*}x_{i}=1-\sum_{i=k+1}^{n}x_{i}x_{i}^{*}=\sum_{i=1}^{k}x_{i}x_{i}^{*}.
\]
Then we set 
\[
a=\sum_{i=1}^{k}x_{i}^{*}x_{i}=\sum_{i=1}^{k}x_{i}x_{i}^{*},\quad b=\sum_{i=2}^{k}x_{i}^{*}x_{i},\quad c=\sum_{i=2}^{k}x_{i}x_{i}^{*}.
\]
We have
\[
x_{1}^{*}x_{1}+b=x_{1}x_{1}^{*}+c=a.
\]
So 
\begin{equation}
x_{1}^{*}x_{1}-x_{1}x_{1}^{*}=c-b.\label{eq:c-b}
\end{equation}
Since $\varepsilon_{1j}=\eta_{1j}=1$ for $2\leq j\leq k$, we see
that $x_{1}$ commutes with $b$ and $c$, and in particular by \eqref{eq:c-b},
\[
x_{1}(x_{1}^{*}x_{1}-x_{1}x_{1}^{*})=(x_{1}^{*}x_{1}-x_{1}x_{1}^{*})x_{1}.
\]
Then by Putnam's theorem (see for example \cite[Coroallary 2.2.10]{sakai91oadyn}),
$x_{1}$ is normal.
\end{proof}
By virtue of Lemma \ref{lem:normal}, we make the convention that
\begin{equation}
\varepsilon_{ij}=\eta_{ij},\quad\text{if }\eta_{ii}=1\text{ or }\eta_{jj}=1.\label{eq:convention eta}
\end{equation}
and for each $i$ with $\eta_{ii}=0$, there exists $j\neq i$ with
$\eta_{jj}=0$ such that
\begin{equation}
\varepsilon_{ij}=0\text{ or }\eta_{ij}=0.\label{eq:convention eta 2}
\end{equation}
For simplicity, we say that the pair $(\varepsilon,\eta)$ is \emph{regular}
if \eqref{eq:convention eta} and \eqref{eq:convention eta 2} hold.
According to the above lemma, for any non regular pair $(\varepsilon,\eta)$
we may always find a regular one which associates the same sphere.
So we will only consider the regular case.
\begin{rem}
\label{rem:normal}By the above lemma, if all the generators $x_{i}$
are normal ($\eta_{ii}=1$ for all $1\leq i\leq n$), the C{*}-algebra
$C(S_{\mathbb{C},\varepsilon,\eta}^{n-1})$ can be simply determined
by the entries of $\varepsilon$, that is,
\[
C(S_{\mathbb{C},\varepsilon,\eta}^{n-1})=C^{*}(x_{1},\ldots,x_{n}\mid x_{i}^{*}x_{i}=x_{i}x_{i}^{*},\sum_{i=1}^{n}x_{i}^{*}x_{i}=1,x_{i}x_{j}=x_{j}x_{i}\text{ if }\varepsilon_{ij}=1).
\]
However, if some generators $x_{i}$ are not normal, it is still worth
considering two different matrices $\varepsilon,\eta$ rather than
one since there exist non-trivial representations of $C(S_{\mathbb{C},\varepsilon,\eta}^{n-1})$
with $\varepsilon_{ij}\neq\eta_{ij}$. For instance, take
\[
\varepsilon=\begin{bmatrix}0 & 1\\
1 & 0
\end{bmatrix},\quad\eta=\begin{bmatrix}0 & 0\\
0 & 0
\end{bmatrix},
\]
then there exists a representation 
\[
\pi:C(S_{\mathbb{C},\varepsilon,\eta}^{1})\to\mathbb{M}_{4}(\mathbb{C}),\quad\pi(x_{1})=a,\pi(x_{2})=b
\]
such that $a$ and $b$ are not normal and 
\[
ab=ba(\neq0),\ ab^{*}\neq b^{*}a.
\]
Indeed, it suffices to take 
\[
a=\begin{bmatrix}0 & 0 & 0\\
0 & 0 & 0\\
1 & 0 & 0\\
 &  &  & \frac{\sqrt{2}}{2}
\end{bmatrix},\quad b=\begin{bmatrix}0 & 0 & 0\\
1 & 0 & 0\\
0 & 1 & 0\\
 &  &  & \frac{\sqrt{2}}{2}
\end{bmatrix}.
\]

\end{rem}

\subsection{Quantum symmetries}

Now we introduce the corresponding quantum groups. We refer to \cite{woronowicz87matrix,woronowicz1998note,timmermann08qgbook}
for any unexplained notation and terminology on compact matrix quantum
groups. Define the universal C{*}-algebra 
\[
C(U_{n}^{\varepsilon,\eta})=C^{*}(u_{ij},i,j=1,\ldots,n\mid u\text{ and }\bar{u}\text{ are unitary, }R^{\varepsilon}\text{ and }R_{*}^{\eta}\text{ hold}),
\]
where $R^{\varepsilon}$ are the relations
\[
u_{ik}u_{jl}=\begin{cases}
u_{jl}u_{ik} & \text{if \ensuremath{\varepsilon_{ij}=1} and \ensuremath{\varepsilon_{kl}=1}}\\
u_{jk}u_{il} & \text{if \ensuremath{\varepsilon_{ij}=1} and \ensuremath{\varepsilon_{kl}=0}}\\
u_{il}u_{jk} & \text{if \ensuremath{\varepsilon_{ij}=0} and \ensuremath{\varepsilon_{kl}=1}}
\end{cases},
\]
and $R_{*}^{\eta}$ are the relations
\begin{equation}
u_{ik}^{*}u_{jl}=u_{jl}u_{ik}^{*},\quad\text{if }\eta_{ij}=\eta_{kl}=1,\label{eq:11}
\end{equation}
\begin{equation}
u_{ik}^{*}u_{jl}=0,\ u_{ik}u_{jl}^{*}=0,\quad\text{if }\eta_{ij}=1,\eta_{kl}=0,k\neq l,\label{eq:zero l}
\end{equation}
\begin{equation}
u_{ik}^{*}u_{jl}=0,\ u_{ik}u_{jl}^{*}=0,\quad\text{if }\eta_{ij}=0,\eta_{kl}=1,i\neq j,\label{eq:zero r}
\end{equation}
\begin{equation}
u_{ik}^{*}u_{jk}=u_{il}^{*}u_{jl}=u_{jk}u_{ik}^{*}=u_{jl}u_{il}^{*},\quad\text{if }\eta_{ij}=1,\eta_{kk}=\eta_{ll}=0,\label{eq:eq kk}
\end{equation}
\begin{equation}
u_{ki}^{*}u_{kj}=u_{li}^{*}u_{lj}=u_{kj}u_{ki}^{*}=u_{lj}u_{li}^{*},\quad\text{if }\eta_{ij}=1,\eta_{kk}=\eta_{ll}=0.\label{eq:eq kk left}
\end{equation}
So for $\eta_{ij}=1$ we may define 
\begin{equation}
X_{ij}=u_{ik}^{*}u_{jk}=u_{jk}u_{ik}^{*},\quad Y_{ij}=u_{ki}^{*}u_{kj}=u_{kj}u_{ki}^{*},\label{eq:xij yij}
\end{equation}
where $k$ is any index satisfying $\eta_{kk}=0$. Note that this
definition does not depend on the choice of $k$ by virtue of \eqref{eq:eq kk}
and \eqref{eq:eq kk left}. 

We consider the comultiplication defined by
\[
\begin{array}{cccc}
\Delta: & C(U_{n}^{\varepsilon,\eta}) & \to & C(U_{n}^{\varepsilon,\eta})\otimes C(U_{n}^{\varepsilon,\eta})\\
 & u_{ij} & \mapsto & \sum_{k=1}^{n}u_{ik}\otimes u_{kj}
\end{array}.
\]
Note that if all non-diagonal entries of $\varepsilon$ and all entries
of $\eta$ are $1$, then we obtain the usual unitary group $U_{n}$
of degree $n$. If all entries of $\varepsilon$ and $\eta$ are $0$,
we get the free unitary group $(C(U_{n}^{+}),\Delta)$ introduced
by Shuzhou Wang \cite{vandalewang96universal}.
\begin{prop}
\label{prop:is cmqg}$U_{n}^{\varepsilon,\eta}$ is a compact matrix
quantum group.\end{prop}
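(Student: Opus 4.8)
To prove that $U_{n}^{\varepsilon,\eta}$ is a compact matrix quantum group, I need to verify two things in the sense of Woronowicz's definition: first, that the comultiplication $\Delta$ extends to a well-defined $*$-homomorphism on $C(U_{n}^{\varepsilon,\eta})$, and second, that the fundamental matrix $u=(u_{ij})$ and its conjugate $\bar u$ are invertible (which is already built into the defining relations, since we impose that both are unitary). The main content is therefore the verification that $\Delta$ is well-defined.

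\begin{proof}[Proof sketch]
The plan is to use the universal property of $C(U_{n}^{\varepsilon,\eta})$. By construction this C*-algebra is generated by elements $u_{ij}$ subject to the relations that $u$ and $\bar u$ are unitary together with $R^{\varepsilon}$ and $R_{*}^{\eta}$. To define $\Delta$ it suffices to exhibit, in the target algebra $C(U_{n}^{\varepsilon,\eta})\otimes C(U_{n}^{\varepsilon,\eta})$, elements $v_{ij}=\sum_{k}u_{ik}\otimes u_{kj}$ and to check that the matrix $v=(v_{ij})$ satisfies exactly the same defining relations; the universal property then yields a unique $*$-homomorphism sending $u_{ij}$ to $v_{ij}$, which is the desired $\Delta$. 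First I would check that $v$ and $\bar v$ are unitary: this is the standard computation showing that a product of two ``unitary-type'' matrices over a tensor product remains unitary, using that each $u$-factor satisfies the sphere-type summation relations, and it goes through verbatim as in the free unitary group case.

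The heart of the argument is to verify that the $v_{ij}$ satisfy the partial commutation relations $R^{\varepsilon}$ and the $*$-relations $R_{*}^{\eta}$. For $R^{\varepsilon}$ one expands $v_{ik}v_{jl}=\sum_{p,q}u_{ip}u_{jq}\otimes u_{pk}u_{ql}$ and the analogous product $v_{jl}v_{ik}$, and then applies the relations $R^{\varepsilon}$ to each tensor leg. The point is that the three cases in $R^{\varepsilon}$ are engineered precisely so that the double sum reorganizes correctly: whenever $\varepsilon_{ij}=1$ one may swap the first-leg factors (possibly after relabeling indices according to the prescribed rule), and whenever $\varepsilon_{pq}=1$ on the second leg one may swap there, with the off-diagonal cases handling the index permutations. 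I would carry out the bookkeeping for each of the cases $\varepsilon_{ij},\varepsilon_{kl}\in\{0,1\}$ and confirm that the reindexed double sums agree. For $R_{*}^{\eta}$ the corresponding computation uses \eqref{eq:11}--\eqref{eq:eq kk left}; here the auxiliary quantities $X_{ij},Y_{ij}$ from \eqref{eq:xij yij} are the natural invariants that let one match terms, since they are independent of the summation index $k$.

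The step I expect to be the main obstacle is the $R_{*}^{\eta}$ verification, specifically checking that the vanishing relations \eqref{eq:zero l} and \eqref{eq:zero r} are preserved and that the equalities \eqref{eq:eq kk} and \eqref{eq:eq kk left} propagate through the coproduct. These relations mix normal and non-normal generators (the cases $\eta_{ii}=0$ versus $\eta_{ii}=1$), so the expansion of $v_{ik}^{*}v_{jl}=\sum_{p,q}u_{ip}^{*}u_{jq}\otimes u_{pk}^{*}u_{ql}$ produces terms that must be shown to vanish or to coincide termwise; the delicate point is that one must invoke several of the relations \eqref{eq:11}--\eqref{eq:eq kk left} simultaneously on the two tensor legs, and track when a factor is forced to be zero. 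Once these termwise cancellations and matchings are established, the universal property delivers $\Delta$, coassociativity follows from the coassociativity of the formula $\Delta(u_{ij})=\sum_{k}u_{ik}\otimes u_{kj}$ (a purely formal computation), and the density of the linear span of the $u_{ij}$ in a dense $*$-subalgebra is immediate from the definition, completing the verification that $U_{n}^{\varepsilon,\eta}$ is a compact matrix quantum group.
\end{proof}
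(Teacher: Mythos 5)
Your proposal follows essentially the same route as the paper: invoke the universal property, and verify that the elements $v_{ij}=\sum_{k}u_{ik}\otimes u_{kj}$ satisfy unitarity, $R^{\varepsilon}$ and $R_{*}^{\eta}$, with the $R_{*}^{\eta}$ relations (which you correctly single out as the delicate part) handled by splitting the double sums and using that $X_{ij},Y_{ij}$ are independent of the auxiliary index. The only detail you leave implicit, and which the paper needs to close the argument for \eqref{eq:zero l} and \eqref{eq:eq kk}, is the trick of rewriting $\sum_{r:\eta_{rr}=0}u_{rk}^{*}u_{rl}$ via column-orthogonality of $u$ as $-\sum_{r:\eta_{rr}=1}u_{rk}^{*}u_{rl}$ and then killing each remaining term with the vanishing relations; with that bookkeeping supplied, your outline matches the paper's proof.
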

\begin{proof}
It suffices to prove that $\Delta$ defines a $*$-homomorphism on
$C(U_{n}^{\varepsilon,\eta})$. By the universality of $C(U_{n}^{\varepsilon,\eta})$,
it remains to verify that the elements $u_{ij}'\coloneqq\Delta(u_{ij})$,
$1\leq i,j\leq n$ satisfy the relations in the definition of $C(U_{n}^{\varepsilon,\eta})$.
It is routine to see that the matrix $u'=[u_{ij}']$ and its conjugate
are unitary. The verification for $R^{\varepsilon}$ follows the same
pattern as in \cite{speicherweber16epsilinqg}, and we omit the details.
We are left with verifying the relations $R_{*}^{\eta}$.

In order to prove \eqref{eq:11} for $u_{ij}'$, we assume that $\eta_{ij}=\eta_{kl}=1$.
We have
\begin{align*}
u_{ik}'^{*}u_{jl}' & =\sum_{1\leq r,p\leq n}u_{ir}^{*}u_{jp}\otimes u_{rk}^{*}u_{pl}.
\end{align*}
By \eqref{eq:zero l} and \eqref{eq:zero r}, we see that $u_{ir}^{*}u_{jp}=u_{rk}^{*}u_{pl}=0$
if $p\neq r$ and $\eta_{rp}=0$. Hence together with \eqref{eq:xij yij}
the above equality can be rewritten as
\begin{align*}
u_{ik}'^{*}u_{jl}' & =\sum_{r,p:\eta_{rp}=1}u_{ir}^{*}u_{jp}\otimes u_{rk}^{*}u_{pl}+\sum_{r:\eta_{rr}=0}u_{ir}^{*}u_{jr}\otimes u_{rk}^{*}u_{rl}\\
 & =\sum_{r,p:\eta_{rp}=1}u_{ir}^{*}u_{jp}\otimes u_{rk}^{*}u_{pl}+|\{r:\eta_{rr}=0\}|X_{ij}\otimes Y_{kl}\\
 & =\sum_{r,p:\eta_{rp}=1}u_{jp}u_{ir}^{*}\otimes u_{pl}u_{rk}^{*}+|\{r:\eta_{rr}=0\}|X_{ij}\otimes Y_{kl},
\end{align*}
where the last equality follows from $\eqref{eq:11}$. Similarly we
have
\[
u_{jl}'u_{ik}'^{*}=\sum_{r,p:\eta_{rp}=1}u_{jp}u_{ir}^{*}\otimes u_{pl}u_{rk}^{*}+|\{r:\eta_{rr}=0\}|X_{ij}\otimes Y_{kl}.
\]
Thus we obtain $u_{ik}'^{*}u_{jl}'=u_{jl}'u_{ik}'^{*}$. 

For \eqref{eq:zero l}, assume that $\eta_{ij}=1,\eta_{kl}=0$ with
$k\neq l$. Then for any pair $(r,p)$ with $r\neq p$, or for $p=r$
with $\eta_{rr}=1$, either $u_{ir}^{*}u_{jp}=0$ or $u_{rk}^{*}u_{pl}=0$
according to \eqref{eq:zero l}. Hence we have
\begin{align*}
u_{ik}'^{*}u_{jl}' & =\sum_{1\leq r,p\leq n}u_{ir}^{*}u_{jp}\otimes u_{rk}^{*}u_{pl}=\sum_{r:\eta_{rr}=0}u_{ir}^{*}u_{jr}\otimes u_{rk}^{*}u_{rl}\\
 & =X_{ij}\otimes(\sum_{r:\eta_{rr}=0}u_{rk}^{*}u_{rl})=X_{ij}\otimes(0-\sum_{r:\eta_{rr}=1}u_{rk}^{*}u_{rl})=0,
\end{align*}
where the last equality follows from the fact that $u_{rk}^{*}u_{rl}=0$
for $\eta_{rr}=1$ according to \eqref{eq:zero l}. In the same way
we see that $u_{ik}'u_{jl}'^{*}=0$. The case $\eta_{ij}=0,\eta_{kl}=1,i\neq j$
is similar.

It remains to deal with the relations \eqref{eq:eq kk} and \eqref{eq:eq kk left}.
Assume that $\eta_{ij}=1$ and $\eta_{kk}=0$. We have

\begin{align*}
u_{ik}'^{*}u_{jk}' & =\sum_{r,p:\eta_{rp}=1}u_{ir}^{*}u_{jp}\otimes u_{rk}^{*}u_{pk}+\sum_{r,p:\eta_{rp}=0}u_{ir}^{*}u_{jp}\otimes u_{rk}^{*}u_{pk}\\
 & =\sum_{r,p:\eta_{rp}=1}u_{ir}^{*}u_{jp}\otimes X_{rp}+\sum_{r:\eta_{rr}=0}u_{ir}^{*}u_{jr}\otimes u_{rk}^{*}u_{rk} & \text{by \eqref{eq:zero l}}\\
 & =\sum_{r,p:\eta_{rp}=1}u_{ir}^{*}u_{jp}\otimes X_{rp}+X_{ij}\otimes(1-\sum_{r:\eta_{rr}=1}u_{rk}^{*}u_{rk})\\
 & =\sum_{r,p:\eta_{rp}=1}u_{jp}u_{ir}^{*}\otimes X_{rp}+X_{ij}\otimes(1-\sum_{r:\eta_{rr}=1}X_{rr}). & \text{by\eqref{eq:11}}
\end{align*}
Similarly we obtain
\[
u_{jk}'u_{ik}'^{*}=\sum_{r,p:\eta_{rp}=1}u_{jp}u_{ir}^{*}\otimes X_{rp}+X_{ij}\otimes(1-\sum_{r:\eta_{rr}=1}X_{rr}),
\]
which yields that $u_{ik}'^{*}u_{jk}'=u_{jk}'u_{ik}'^{*}$. Moreover,
we note that the right hand side of the above formula does not depend
on $k$. Therefore we see that for $\eta_{ll}=0$, we have
\[
u_{ik}'^{*}u_{jk}'=u_{il}'^{*}u_{jl}'=u_{jk}'u_{ik}'^{*}=u_{jl}'u_{il}'^{*},
\]
as desired. The case for \eqref{eq:eq kk left} is similar.
\end{proof}
Now we will prove that $U_{n}^{\varepsilon,\eta}$ is the quantum
symmetry group of $S_{\mathbb{C},\varepsilon,\eta}^{n-1}$. We refer
to \cite[Remark 4.10]{speicherweber16epsilinqg} for more explanation
on the notion of quantum symmetries in our setting.
\begin{thm}
Assume that $(\varepsilon,\eta)$ is regular. Then $U_{n}^{\varepsilon,\eta}$
is the quantum symmetry group of $S_{\mathbb{C},\varepsilon,\eta}^{n-1}$,
in the sense that $U_{n}^{\varepsilon,\eta}$ acts on $S_{\mathbb{C},\varepsilon,\eta}^{n-1}$
by homomorphisms
\[
\alpha,\beta:C(S_{\mathbb{C},\varepsilon,\eta}^{n-1})\to C(U_{n}^{\varepsilon,\eta})\otimes C(S_{\mathbb{C},\varepsilon,\eta}^{n-1}),
\]
\[
\alpha(x_{i})=\sum_{j}u_{ij}\otimes x_{j},\quad\beta(x_{i})=\sum_{k}u_{ki}\otimes x_{k},
\]
and for any compact matrix quantum group $\mathbb{G}$ acting on $S_{\mathbb{C},\varepsilon,\eta}^{n-1}$
in the above way, $\mathbb{G}$ is a compact matrix quantum subgroup
of $U_{n}^{\varepsilon,\eta}$.\end{thm}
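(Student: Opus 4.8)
The plan is to treat the two assertions in turn. For the existence of the coactions $\alpha,\beta$ I would argue exactly as in Proposition~\ref{prop:is cmqg}: by the universal property of $C(S_{\mathbb{C},\varepsilon,\eta}^{n-1})$ it suffices to check that the elements $\alpha(x_i)=\sum_j u_{ij}\otimes x_j$ satisfy the defining relations of the sphere (and likewise for $\beta$). The normalization $\sum_i\alpha(x_i)^*\alpha(x_i)=\sum_i\alpha(x_i)\alpha(x_i)^*=1$ follows at once from the four orthogonality relations expressing that $u$ and $\bar u$ are unitary; the relations $\alpha(x_i)\alpha(x_j)=\alpha(x_j)\alpha(x_i)$ for $\varepsilon_{ij}=1$ follow from $R^\varepsilon$ by the computation indicated in \cite{speicherweber16epsilinqg}; and the relations $\alpha(x_i)^*\alpha(x_j)=\alpha(x_j)\alpha(x_i)^*$ for $\eta_{ij}=1$ follow from $R_*^\eta$ by precisely the computation already performed for $u_{ik}'^*u_{jl}'$ in the proof of Proposition~\ref{prop:is cmqg}. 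That $\alpha,\beta$ are coactions is immediate from the formula for $\Delta$.

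For the universal property, let $\mathbb{G}=(A,w)$ be a compact matrix quantum group with fundamental corepresentation $w=(w_{ij})$ acting through $\gamma(x_i)=\sum_j w_{ij}\otimes x_j$ and $\delta(x_i)=\sum_k w_{ki}\otimes x_k$. The goal is to show that the $w_{ij}$ satisfy every defining relation of $C(U_n^{\varepsilon,\eta})$; the universal property then produces a surjective unital $*$-homomorphism $C(U_n^{\varepsilon,\eta})\to A$ sending $u_{ij}\mapsto w_{ij}$, and this map intertwines the comultiplications because both are given by $\sum_k u_{ik}\otimes u_{kj}$ on the generators, exhibiting $\mathbb{G}$ as a matrix quantum subgroup of $U_n^{\varepsilon,\eta}$. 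Each relation is extracted by pushing a sphere relation through $\gamma$ or $\delta$ and matching coefficients. For instance, applying $\gamma$ to $\sum_i x_i^*x_i=1$ yields $\sum_{k,l}(\sum_i w_{ik}^*w_{il})\otimes x_k^*x_l=\sum_k 1\otimes x_k^*x_k$, and once the monomials $\{x_k^*x_l\}$ are known to be linearly independent in $C(S_{\mathbb{C},\varepsilon,\eta}^{n-1})$ this forces $\sum_i w_{ik}^*w_{il}=\delta_{kl}$, that is $w^*w=1$. Running the same argument on $\sum_i x_ix_i^*=1$, and on both normalizations via $\delta$, gives $ww^*=\bar w^*\bar w=\bar w\bar w^*=1$, so that $w$ and $\bar w$ are unitary.

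The relations $R^\varepsilon$ and $R_*^\eta$ come out the same way. For $\varepsilon_{ij}=1$ the identity $\gamma(x_i)\gamma(x_j)=\gamma(x_j)\gamma(x_i)$ reads $\sum_{k,l}(w_{ik}w_{jl}-w_{jk}w_{il})\otimes x_kx_l=0$, and expanding in a basis of the holomorphic degree-two monomials (whose only relations are $x_kx_l=x_lx_k$ for $\varepsilon_{kl}=1$) reproduces the three cases of $R^\varepsilon$, just as in \cite{speicherweber16epsilinqg}. For $\eta_{ij}=1$ the identity $\gamma(x_i)^*\gamma(x_j)=\gamma(x_j)\gamma(x_i)^*$ reads $\sum_{k,l}w_{ik}^*w_{jl}\otimes x_k^*x_l=\sum_{k,l}w_{jl}w_{ik}^*\otimes x_lx_k^*$, and the coefficient matching is more delicate because the mixed monomials obey both the $\eta$-commutations $x_k^*x_l=x_lx_k^*$ (for $\eta_{kl}=1$) and the normalizations. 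Comparing the coefficient of a monomial $x_k^*x_l$ with $\eta_{kl}=0$ and $k\neq l$---which coincides with no monomial of the form $x_{l'}x_{k'}^*$ occurring on the right---forces $w_{ik}^*w_{jl}=0$, and taking adjoints gives the vanishing relations \eqref{eq:zero l} and \eqref{eq:zero r}; comparing the coefficients of the normal monomials $x_k^*x_l$ with $\eta_{kl}=1$ gives \eqref{eq:11}. Finally, for the non-normal diagonal monomials one uses that $\sum_{k:\eta_{kk}=0}x_k^*x_k=\sum_{k:\eta_{kk}=0}x_kx_k^*$ (both equal $1-\sum_{k:\eta_{kk}=1}x_k^*x_k$, the generators with $\eta_{kk}=1$ being normal); choosing a basis that incorporates this single coupling relation and matching coefficients forces $w_{ik}^*w_{jk}$ and $w_{jk}w_{ik}^*$ to be independent of $k$ over $\{k:\eta_{kk}=0\}$, which is exactly \eqref{eq:eq kk}, while the same computation for $\delta$ gives \eqref{eq:eq kk left}. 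The regularity of $(\varepsilon,\eta)$---in particular \eqref{eq:convention eta 2}---guarantees that enough non-normal indices exist for these relations to be nondegenerate and for $X_{ij},Y_{ij}$ in \eqref{eq:xij yij} to be well defined.

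The real content, and the main obstacle, is the linear-independence analysis of the degree-two part of $C(S_{\mathbb{C},\varepsilon,\eta}^{n-1})$ underlying every coefficient comparison above: one must prove that the only linear relations among the monomials $x_kx_l$, $x_k^*x_l$ and $x_kx_l^*$ are the evident ones coming from $\varepsilon$, from $\eta$, and from the two normalizations. I would establish this by constructing a sufficiently large concrete representation of the sphere algebra---a graph-product/free-product model that realizes the commuting pairs classically while keeping the remaining generators as free as the relations allow---in which the relevant monomials are manifestly independent; independence in the model then lifts back to $C(S_{\mathbb{C},\varepsilon,\eta}^{n-1})$. This model construction, together with the careful bookkeeping of the basis near the non-normal generators, is exactly where the genuine work lies and where the complex case departs from the real case of \cite{speicherweber16epsilinqg}; once it is in place, all defining relations of $C(U_n^{\varepsilon,\eta})$ fall out of the coefficient matching described above.
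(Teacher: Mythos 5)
Your overall architecture matches the paper's: existence of $\alpha,\beta$ is routine from the universal property, and maximality is obtained by pushing the sphere relations through the action and comparing coefficients of degree-two monomials. The difficulty is where the linear independence that justifies the comparison comes from, and here your proposal has a genuine gap. You reduce everything to one global claim --- that the only linear relations among the monomials $x_kx_l$, $x_k^*x_l$, $x_kx_l^*$ in $C(S_{\mathbb{C},\varepsilon,\eta}^{n-1})$ are the ``evident'' ones --- and defer its proof to an unconstructed ``graph-product/free-product model.'' That claim is the entire technical content of the theorem, it is strictly stronger than what is needed, and nothing in your write-up establishes it. The paper never proves such a basis theorem. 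Instead it localizes: for each pair $k\neq l$ it composes the action with the quotient $\pi_{kl}$ onto a two-generator sphere (killing all other coordinates), uses the sign-flip automorphism $x_1\mapsto -x_1$ to separate the off-diagonal from the diagonal part, and then checks independence of only the handful of elements actually needed, in explicit small representations: $C(\mathbb{T}^2)$ when $\eta_{kl}=1$, $C^*(\mathbb{F}_2)$ or the $4\times 4$ matrix model of Remark \ref{rem:normal} when $\eta_{kl}=0$, and the pair $\{b^*b,bb^*\}$ in $\mathbb{M}_4(\mathbb{C})$ for the diagonal comparisons.

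The place where your global claim is most delicate is exactly the relation \eqref{eq:eq kk}: to prove $w_{ik}^*w_{jk}=w_{il}^*w_{jl}$ for two non-normal indices $k\neq l$ with $\varepsilon_{kl}=\eta_{kl}=1$, the two-generator quotient is useless, because there Lemma \ref{lem:normal}(2) forces the surviving generators to become normal and the relevant monomials collapse; in other words, there are linear relations beyond the two normalizations you list, coming from commutation forcing normality. This is precisely why the paper splits into cases (a) and (b), invokes regularity \eqref{eq:convention eta 2} to produce auxiliary non-normal indices $m,m'$, passes to a four-generator quotient $\pi_{klmm'}$, and separates the terms with a direct-sum representation $\tilde\pi$ into $\mathbb{M}_4(\mathbb{C})\oplus\mathbb{M}_4(\mathbb{C})$. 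Your remark that regularity ``guarantees that enough non-normal indices exist'' gestures at this but supplies no argument, and a correct formulation of your independence claim would have to exclude exactly these degeneracies. Until the independence statement is precisely stated and a model realizing it is actually constructed --- which is what the maps $\pi_{kl}$, $\pi_{\mathbb{T}^2}$, $\pi_{\mathbb{F}_2}$, $\pi$ and $\tilde\pi$ accomplish piecemeal in the paper --- the proof is not complete.
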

\begin{proof}
Following the same pattern as in the proof of Proposition \ref{prop:is cmqg},
it is easy to check that the actions $\alpha$ and $\beta$ for $U_{n}^{\varepsilon,\eta}$
exist. In the following we only prove the maximality. In other words,
let $\mathbb{G}$ be another $n\times n$ compact matrix quantum group
with matrix coefficients $\{u_{ij}:1\leq i,j\leq n\}$, acting on
$S_{\mathbb{C},\varepsilon,\eta}^{n-1}$ via the actions
\[
\alpha',\beta':C(S_{\mathbb{C},\varepsilon,\eta}^{n-1})\to C(\mathbb{G})\otimes C(S_{\mathbb{C},\varepsilon,\eta}^{n-1}),
\]
\[
\alpha'(x_{i})=\sum_{j}u_{ij}\otimes x_{j},\quad\beta'(x_{i})=\sum_{k}u_{ki}\otimes x_{k}.
\]
We need to show that the unitary conditions and the relations $R^{\varepsilon}$
and $R_{*}^{\eta}$ hold for the generators $u_{ij}$. Let us verify
the relations $R_{*}^{\eta}$. To this end, for any $k,l$ with $k\neq l$,
we introduce the homomorphism 
\[
\pi_{kl}:C(S_{\mathbb{C},\varepsilon,\eta}^{n-1})\to C(S_{\mathbb{C},\tilde{\varepsilon},\tilde{\eta}}^{1}),\quad\pi_{kl}(x_{k})=x_{1},\pi_{kl}(x_{l})=x_{2},\pi_{kl}(x_{i})=0,\ i\neq k,l
\]
where $\tilde{\varepsilon}_{12}=\varepsilon_{kl}$, $\tilde{\eta}_{12}=\eta_{kl}$,
$\tilde{\eta}_{11}=\eta_{kk}$, $\tilde{\eta}_{22}=\eta_{ll}$.

Take $i,j$ with $\eta_{ij}=1$. In particular, for any $k,l$ with
$k\neq l$, we have
\[
(\mathrm{id}\otimes\pi_{kl})\circ\alpha'(x_{i}^{*}x_{j})=(\mathrm{id}\otimes\pi_{kl})\circ\alpha'(x_{j}x_{i}^{*}),
\]
which means
\begin{align*}
 & u_{ik}^{*}u_{jl}\otimes x_{1}^{*}x_{2}+u_{il}^{*}u_{jk}\otimes x_{2}^{*}x_{1}+u_{ik}^{*}u_{jk}\otimes x_{1}^{*}x_{1}+u_{il}^{*}u_{jl}\otimes x_{2}^{*}x_{2}\\
=\  & u_{jl}u_{ik}^{*}\otimes x_{2}x_{1}^{*}+u_{jk}u_{il}^{*}\otimes x_{1}x_{2}^{*}+u_{jk}u_{ik}^{*}\otimes x_{1}x_{1}^{*}+u_{jl}u_{il}^{*}\otimes x_{2}x_{2}^{*}.
\end{align*}
By definition, $x_{1}\mapsto-x_{1}$ gives a homomorphism of $C(S_{\mathbb{C},\tilde{\varepsilon},\tilde{\eta}}^{1})$,
so the above equality still hold when replacing $x_{1}$ by $-x_{1}$.
Combing these two equalities we obtain
\begin{equation}
u_{ik}^{*}u_{jl}\otimes x_{1}^{*}x_{2}+u_{il}^{*}u_{jk}\otimes x_{2}^{*}x_{1}=u_{jl}u_{ik}^{*}\otimes x_{2}x_{1}^{*}+u_{jk}u_{il}^{*}\otimes x_{1}x_{2}^{*},\label{eq:key eq1}
\end{equation}
\[
u_{ik}^{*}u_{jk}\otimes x_{1}^{*}x_{1}+u_{il}^{*}u_{jl}\otimes x_{2}^{*}x_{2}=u_{jk}u_{ik}^{*}\otimes x_{1}x_{1}^{*}+u_{jl}u_{il}^{*}\otimes x_{2}x_{2}^{*}.
\]
Recall that $x_{1}^{*}x_{1}+x_{2}^{*}x_{2}=x_{1}x_{1}^{*}+x_{2}x_{2}^{*}=1$.
The second equality above can be written as
\begin{equation}
u_{ik}^{*}u_{jk}-u_{jk}u_{ik}^{*}+(u_{il}^{*}u_{jl}-u_{ik}^{*}u_{jk})\otimes x_{2}^{*}x_{2}+(u_{jk}u_{ik}^{*}-u_{jl}u_{il}^{*})\otimes x_{2}x_{2}^{*}=0.\label{eq:key2}
\end{equation}
It is obvious to see that the unit element $1$ is linearly independent
from $\{x_{2}^{*}x_{2},x_{2}x_{2}^{*}\}$. Therefore we have
\begin{equation}
u_{ik}^{*}u_{jk}=u_{jk}u_{ik}^{*}.\label{eq:11bis pf}
\end{equation}

Now we prove \eqref{eq:11}. Assume $k\neq l$ and $\eta_{kl}=1$.
In this case we consider the torus $\mathbb{T}^{2}=\{(z_{1},z_{2})\in\mathbb{C}^{2}:|z_{1}|=|z_{2}|=1\}$,
and we have a homomorphism 
\[
\pi_{\mathbb{T}^{2}}:C(S_{\mathbb{C},\tilde{\varepsilon},\tilde{\eta}}^{1})\to C(\mathbb{T}^{2}),\quad\pi_{\mathbb{T}^{2}}(x_{1})=f_{1},\pi_{\mathbb{T}^{2}}(x_{2})=f_{2},
\]
where $f_{i}(z_{1},z_{2})=\frac{\sqrt{2}z_{i}}{2}$ ($i=1,2$) are
the coordinate functions. Applying $\mathrm{id}\otimes\pi_{\mathbb{T}^{2}}$
to \eqref{eq:key eq1}, we have 
\[
(u_{ik}^{*}u_{jl}-u_{jl}u_{ik}^{*})\otimes\bar{f}_{1}f_{2}-(u_{jk}u_{il}^{*}-u_{il}^{*}u_{jk})\otimes f_{1}\bar{f}_{2}=0.
\]
Note that $\bar{f}_{1}f_{2}$ and $f_{1}\bar{f}_{2}$ are linearly
independent. Therefore we obtain
\begin{equation}
u_{ik}^{*}u_{jl}=u_{jl}u_{ik}^{*},\quad u_{jk}u_{il}^{*}=u_{il}^{*}u_{jk}.\label{eq:11 pf}
\end{equation}
Together with \eqref{eq:11bis pf} we obtain \eqref{eq:11}. 

For \eqref{eq:zero l}, we assume $k\neq l$ and $\eta_{kl}=0$. If
$\varepsilon_{kl}=0$, we consider the full group C{*}-algebra $C^{*}(\mathbb{F}_{2})$
of the free group with two generators. Denote by $u_{1},u_{2}$ the
corresponding free unitary generators. Note that $u_{1}$ and $u_{2}$
are normal. We have a homomorphism
\begin{equation}
\pi_{\mathbb{F}_{2}}:C(S_{\mathbb{C},\tilde{\varepsilon},\tilde{\eta}}^{1})\to C^{*}(\mathbb{F}_{2}),\quad\pi_{\mathbb{F}_{2}}(x_{1})=\frac{\sqrt{2}}{2}u_{1},\pi_{\mathbb{F}_{2}}(x_{2})=\frac{\sqrt{2}}{2}u_{2}.\label{eq:pf free gp}
\end{equation}
Note that the elements $u_{1}^{*}u_{2},u_{1}u_{2}^{*},u_{2}^{*}u_{1},u_{2}u_{1}^{*}$
are linearly independent. Therefore, applying $\mathrm{id}\otimes\pi_{\mathbb{F}_{2}}$
to \eqref{eq:key eq1}, we have
\[
u_{ik}^{*}u_{jl}=u_{il}^{*}u_{jk}=u_{jl}u_{ik}^{*}=u_{jk}u_{il}^{*}=0.
\]
If $\varepsilon_{kl}=1$, then by our convention \eqref{eq:convention eta}
we have $\eta_{kk}=\eta_{ll}=0$. Then we have a homomorphism given
in Remark \ref{rem:normal}
\begin{equation}
\pi:C(S_{\mathbb{C},\tilde{\varepsilon},\tilde{\eta}}^{1})\to\mathbb{M}_{4}(\mathbb{C}),\quad\pi(x_{1})=a,\pi(x_{2})=b.\label{eq:pf 44}
\end{equation}
Here we see from Remark \ref{rem:normal} that the elements $a^{*}b,ab^{*},b^{*}a,ba^{*}$
are linearly independent. So applying $\mathrm{id}\otimes\pi$ to
\eqref{eq:key eq1}, we have
\[
u_{ik}^{*}u_{jl}=u_{il}^{*}u_{jk}=u_{jl}u_{ik}^{*}=u_{jk}u_{il}^{*}=0.
\]
Thus we proved \eqref{eq:zero l} as desired. By performing similar
computations for $\beta'$, we also obtain the relation \eqref{eq:zero r}.

For \eqref{eq:eq kk}, assume $\eta_{kk}=\eta_{ll}=0$. We will divide
the discussions into two cases: (a) there exists $1\leq m\leq n$
with $\eta_{mm}=0$ such that $\varepsilon_{km}\eta_{km}=0$ and $\varepsilon_{lm}\eta_{lm}=0$;
(b) otherwise. First suppose that (a) holds. 
If $\eta_{km}=0$, again we apply the above homomorphism $\mathrm{id}\otimes\pi$
to \eqref{eq:key2} with $l$ replaced by $m$. Note that the elements
$b^{*}b$ and $bb^{*}$ are linearly independent in $\mathbb{M}_{4}(\mathbb{C})$.
Hence we obtain
\[
u_{im}^{*}u_{jm}-u_{ik}^{*}u_{jk}=u_{jk}u_{ik}^{*}-u_{jm}u_{im}^{*}=0.
\]
Together with \eqref{eq:11bis pf} we obtain
\begin{equation}
u_{ik}^{*}u_{jk}=u_{im}^{*}u_{jm}=u_{jk}u_{ik}^{*}=u_{jm}u_{im}^{*}.\label{eq:ijkm}
\end{equation}
 If $\varepsilon_{km}=0$, it suffices to consider the noncommutative
sphere $S_{\mathbb{C},\tilde{\eta},\tilde{\varepsilon}}^{1}$ instead
of $S_{\mathbb{C},\tilde{\varepsilon},\tilde{\eta}}^{1}$, and replace
$x_{1}$ by its adjoint $x_{1}^{*}$ in \eqref{eq:pf 44}. Then the
same arguments as before yield the relation \eqref{eq:ijkm}. Similarly
we also get 
\[
u_{il}^{*}u_{jl}=u_{im}^{*}u_{jm}=u_{jl}u_{il}^{*}=u_{jm}u_{im}^{*}.
\]
Combining \eqref{eq:ijkm}, we obtain the desired relation \eqref{eq:eq kk}. 

Now suppose that (b) holds. In particular $\varepsilon_{kl}=\eta_{kl}=1$.
By the convention \eqref{eq:convention eta 2}, we take $m\neq m'$
with $\eta_{mm}=\eta_{m'm'}=0$ such that $\varepsilon_{km}\eta_{km}=0$
and $\varepsilon_{lm'}\eta_{lm'}=0$. In this case, instead of $\pi_{kl}$
we consider a homomorphism $\pi_{klmm'}:C(S_{\mathbb{C},\varepsilon,\eta}^{n-1})\to C(S_{\mathbb{C},\tilde{\varepsilon},\tilde{\eta}}^{3})$
with some suitable $(\tilde{\varepsilon},\tilde{\eta})$ which sends
$x_{k},x_{l},x_{m},x_{m'}$ to $x_{1},x_{2},x_{3},x_{4}$ in a similar
way. Then arguing similarly as in \eqref{eq:key2} and combining \eqref{eq:11bis pf},
we have
\begin{align*}
0 & =(u_{il}^{*}u_{jl}-u_{ik}^{*}u_{jk})\otimes x_{2}^{*}x_{2}+(u_{jk}u_{ik}^{*}-u_{jl}u_{il}^{*})\otimes x_{2}x_{2}^{*}\\
 & \qquad+(u_{im}^{*}u_{jm}-u_{ik}^{*}u_{jk})\otimes x_{3}^{*}x_{3}+(u_{jk}u_{ik}^{*}-u_{jm}u_{im}^{*})\otimes x_{3}x_{3}^{*}\\
\; & \qquad+(u_{im'}^{*}u_{jm'}-u_{ik}^{*}u_{jk})\otimes x_{4}^{*}x_{4}+(u_{jk}u_{ik}^{*}-u_{jm'}u_{im'}^{*})\otimes x_{4}x_{4}^{*}.
\end{align*}
We keep the notation of $a$ and $b$ in the representation $\pi$
as before. We consider the direct sum representation 
\[
\tilde{\pi}:C(S_{\mathbb{C},\tilde{\varepsilon},\tilde{\eta}}^{3})\to\mathbb{M}_{4}(\mathbb{C})\oplus\mathbb{M}_{4}(\mathbb{C}),\quad x_{1}\mapsto(a,0),x_{3}\mapsto(b,0),x_{2}\mapsto(0,a),x_{4}\mapsto(0,b).
\]
Then arguing as before by linear independence we obtain the desired
relation \eqref{eq:eq kk}

The relation \eqref{eq:eq kk left} follows similarly as above by
computations for $\beta'$.

The relations $R^{\varepsilon}$ can be proved by similar arguments
as in the proof of \cite[Theorem 4.7]{speicherweber16epsilinqg}.
The only non-obvious ingredient is that $x_{k}x_{l}$ and $x_{l}x_{k}$
are linearly independent for any choice of $\eta$ whenever $\varepsilon_{kl}=0$.
However this follows similarly as what we did for $\{x_{k}^{*}x_{l},x_{l}x_{k}^{*}\}$
in the case $\eta_{kl}=0$. Indeed, as is pointed out before, it suffices
to consider the noncommutative sphere $S_{\mathbb{C},\tilde{\eta},\tilde{\varepsilon}}^{1}$
instead of $S_{\mathbb{C},\tilde{\varepsilon},\tilde{\eta}}^{1}$,
and replace $x_{1}$ by its adjoint $x_{1}^{*}$ in \eqref{eq:pf free gp}
and \eqref{eq:pf 44}. We leave the details to the reader.

In the end we show that $u$ and $\bar{u}$ are unitary. We have
\begin{equation}
1\otimes1=\alpha'(\sum_{i=1}^{n}x_{i}^{*}x_{i})=\sum_{k,l=1}^{n}\sum_{i=1}^{n}u_{ik}^{*}u_{il}\otimes x_{k}^{*}x_{l}.\label{eq:unitary pf}
\end{equation}
Take an arbitrary $1\leq k\leq n$. Consider the circle $\mathbb{T}=\{z\in\mathbb{C}:|z|=1\}$
and the homomorphism
\[
\pi_{k}:C(S_{\mathbb{C},\varepsilon,\eta}^{n-1})\to C(\mathbb{T}),\quad\pi_{k}(x_{k})=f,\pi_{k}(x_{i})=0,\ i\neq k,
\]
where $f(z)=z,z\in\mathbb{T}$. Applying $\mathrm{id}\otimes\pi_{k}$
to the equality \eqref{eq:unitary pf} we obtain
\begin{equation}
\sum_{i=1}^{n}u_{ik}^{*}u_{ik}=1.\label{eq:unitary bis}
\end{equation}
Together with \eqref{eq:unitary pf} this also implies that 
\[
\sum_{1\leq k,l\leq n,k\neq l}\sum_{i=1}^{n}u_{ik}^{*}u_{il}\otimes x_{k}^{*}x_{l}=0.
\]
Take arbitrary $1\leq k,l\leq n$ with $k\neq l$ and consider the
homomorphism $\pi_{kl}$ introduced before. Applying $\mathrm{id}\otimes\pi_{kl}$
to the above equality, we get
\[
\sum_{i=1}^{n}u_{ik}^{*}u_{il}\otimes x_{1}^{*}x_{2}+\sum_{i=1}^{n}u_{il}^{*}u_{ik}\otimes x_{2}^{*}x_{1}=0.
\]
We have already seen that $x_{1}^{*}x_{2}$ and $x_{2}^{*}x_{1}$
are linearly independent in terms of the homomorphism $\pi_{\mathbb{T}^{2}}$.
Hence we deduce that
\[
\sum_{i=1}^{n}u_{ik}^{*}u_{il}=0.
\]
Together with \eqref{eq:unitary bis} we see that $u^{*}u=1$. Considering
the action on $x_{i}x_{i}^{*}$, we see also that $uu^{*}=1$. Hence
$u$ is unitary. Similar arguments for $\beta$ yield that $\bar{u}$
is unitary. Therefore the proof is finished.
\end{proof}

\begin{rem}
It is easy to see that there is a homomorphism 
\[
\phi:C(S_{\mathbb{C},\varepsilon,\eta}^{n-1})\to C(U_{n}^{\varepsilon,\eta}),\quad x_{i}\mapsto u_{i1}.
\]
Intuitively speaking, the sphere $S_{\mathbb{C},\varepsilon,\eta}^{n-1}$
can be viewed as a quantum space determined by the relations of the
first column of the quantum symmetry group which acts on it. A complete
theory towards this direction, in the setting of easy quantum groups,
has been recently developed by \cite{jungweber18qspace}. In general,
it is unclear whether the natural homomorphism in the form of $\phi$
is injective (see the comments in \cite[Section 2]{jungweber18qspace}).
Here we may provide a non-injective example in our setting of mixed
relations. More precisely, let $n=2$ and
\[
\varepsilon=\begin{bmatrix}0 & 0\\
0 & 0
\end{bmatrix},\quad\eta=\begin{bmatrix}0 & 1\\
1 & 0
\end{bmatrix},
\]
Then the natural homomorphism
\[
\phi:C(S_{\mathbb{C},\varepsilon,\eta}^{1})\to C(U_{2}^{\varepsilon,\eta}),\quad x_{i}\mapsto u_{i1},\ i=1,2
\]
is non-injective. Indeed, since $\eta_{11}=\eta_{22}=0$, by the unitary
condition of $u$ and $\bar{u}$ we have
\[
0=u_{11}u_{21}^{*}+u_{12}u_{22}^{*}=2X_{12}.
\]
In particular 
\[
\phi(x_{1}x_{2}^{*})=u_{11}u_{21}^{*}=X_{12}=0.
\]
However, we have $x_{1}x_{2}^{*}\neq0$. Indeed, consider the matrices
$a,b$ given in Remark \ref{rem:normal}. Then instead of $\pi$,
there is a representation 
\[
\pi':C(S_{\mathbb{C},\varepsilon,\eta}^{1})\to\mathbb{M}_{4}(\mathbb{C}),\quad\pi(x_{1})=a,\pi(x_{2})=b^{*},
\]
and 
\[
\pi(x_{1}x_{2}^{*})=ab=\begin{bmatrix}0\\
 & 0\\
 &  & 0\\
 &  &  & \frac{1}{2}
\end{bmatrix}\neq0.
\]
Therefore $\phi$ is not injective. 
\end{rem}

\section{Remarks on the orthogonal cases}

In this section we would like to discuss some related questions appeared
in \cite{speicherweber16epsilinqg}. In \cite{speicherweber16epsilinqg}
another version of commutation relations for quantum orthogonal groups
is proposed. More precisely, we consider the corresponding quantum
group given by the universal C{*}-algebra 
\[
C(\mathring{O}_{n}^{\varepsilon})=C^{*}(u_{ij},i,j=1,\ldots,n\mid u_{ij}=u_{ij}^{*},\ u\text{ is orthogonal, }\mathring{R}^{\varepsilon}\text{ holds}),
\]
where $\mathring{R}^{\varepsilon}$ denotes the relations
\[
u_{ik}u_{jl}=\begin{cases}
u_{jl}u_{ik} & \text{if \ensuremath{\varepsilon_{ij}=1} and \ensuremath{\varepsilon_{kl}=1}}\\
0 & \text{if \ensuremath{\varepsilon_{ij}=1} and \ensuremath{\varepsilon_{kl}=0}}\\
0 & \text{if \ensuremath{\varepsilon_{ij}=0} and \ensuremath{\varepsilon_{kl}=1}}
\end{cases}.
\]
We keep the convention that $\varepsilon=(\varepsilon_{ij} )_{i,j\in \{1,\ldots,n\} }$ is a symmetric matrice with $\varepsilon_{ij} \in \{0,1\}$ and $\varepsilon_{ii}=0$.  The quantum space on which $\mathring{O}_{n}^{\varepsilon}$ acts
maximally was left unsolved in \cite{speicherweber16epsilinqg}. In
the following we will briefly answer this question in terms of quantum
tuples of noncommutative spheres, inspired by \cite{jungweber18qspace}.
Consider the universal C{*}-algebra $C(\mathbb{X}_{n}^{\varepsilon})$
generated by $x_{ij},i,j=1,\ldots,n$ with relations 
\[
x_{ij}=x_{ij}^{*},\ \sum_{i}x_{ik}x_{il}=\delta_{kl},
\]
and
\[
x_{ik}x_{jl}=\begin{cases}
x_{jl}x_{ik} & \text{if \ensuremath{\varepsilon_{ij}=1} and \ensuremath{\varepsilon_{kl}=1}}\\
0 & \text{if \ensuremath{\varepsilon_{ij}=1} and \ensuremath{\varepsilon_{kl}=0}}\\
0 & \text{if \ensuremath{\varepsilon_{ij}=0} and \ensuremath{\varepsilon_{kl}=1}}
\end{cases}.
\]
We remark that in the case where $\varepsilon_{ij}=1$ for all $i\neq j$,
we see that $\mathring{O}_{n}^{\varepsilon}$ equals the classical
hyperoctahedral group $H_{n}$, and $\mathbb{X}_{n}^{\varepsilon}$
is simply the space of $n\times n$ orthogonal matrices with cubic
columns. Note that the set of cubic vectors $I_{n}\subset\mathbb{R}^{n}$
consists of the points $(0,\ldots,0,\pm1,0,\ldots,0)$ on each axis
of $\mathbb{R}^{n}$. It is well-know that $H_{n}$ is the symmetry
group of $I_{n}$. If $\varepsilon_{ij}=0$ for all $i,j$, then $\mathring{O}_{n}^{\varepsilon}$
is the free quantum orthogonal group $O_{n}^{+}$ introduced in \cite{vandalewang96universal}
(see also \cite{banicaspeicher09liberation}), and $\mathbb{X}_{n}^{\varepsilon}$
is the partition quantum space $X_{n,n}(\Pi)$ introduced in \cite{jungweber18qspace},
where in our setting $\Pi$ is the set of non-crossing pair partitions.
\begin{thm}
$\mathring{O}_{n}^{\varepsilon}$ is the quantum symmetry group of
$\mathbb{X}_{n}^{\varepsilon}$, in the sense that $\mathring{O}_{n}^{\varepsilon}$
acts on $\mathbb{X}_{n}^{\varepsilon}$ by homomorphisms 
\[
\alpha,\beta:C(\mathbb{X}_{n}^{\varepsilon})\to C(\mathring{O}_{n}^{\varepsilon})\otimes C(\mathbb{X}_{n}^{\varepsilon}),
\]
\[
\alpha(x_{ik})=\sum_{j}u_{ij}\otimes x_{jk},\quad\beta(x_{ik})=\sum_{j}u_{ji}\otimes x_{jk},
\]
and for any compact matrix quantum group $\mathbb{G}$ acting on $\mathbb{X}_{n}^{\varepsilon}$
in the above way, $\mathbb{G}$ is a compact matrix quantum subgroup
of $\mathring{O}_{n}^{\varepsilon}$.\end{thm}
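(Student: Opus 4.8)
The plan is to follow the two-step template used for the complex spheres: first verify that $\alpha$ and $\beta$ are genuine actions, and then prove maximality by pushing the defining relations of $\mathbb{X}_n^{\varepsilon}$ through the action and separating the resulting tensors by means of test representations of $C(\mathbb{X}_n^{\varepsilon})$. Existence of $\alpha$ (and symmetrically of $\beta$) amounts to checking the three families of relations for $\alpha(x_{ik})=\sum_j u_{ij}\otimes x_{jk}$. Self-adjointness is immediate from $u_{ij}=u_{ij}^{*}$ and $x_{jk}=x_{jk}^{*}$. Expanding $\sum_i\alpha(x_{ik})\alpha(x_{il})=\sum_{p,q}(\sum_i u_{ip}u_{iq})\otimes x_{pk}x_{ql}$ and applying the orthogonality $\sum_i u_{ip}u_{iq}=\delta_{pq}$ together with $\sum_p x_{pk}x_{pl}=\delta_{kl}$ reduces it to $\delta_{kl}\,1\otimes 1$. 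The relations $\mathring R^{\varepsilon}$ follow from the same case analysis as for $U_n^{\varepsilon,\eta}$: in $\alpha(x_{ik})\alpha(x_{jl})=\sum_{p,q}u_{ip}u_{jq}\otimes x_{pk}x_{ql}$ the hypotheses on $\varepsilon_{ij}$ and $\varepsilon_{kl}$ force, through $\mathring R^{\varepsilon}$ applied simultaneously to $u$ and to $x$, that only the terms with $\varepsilon_{pq}=1$ survive and that these commute, or else that the whole sum vanishes.

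The substance is maximality. Suppose $\mathbb{G}$ acts through $\alpha',\beta'$ as in the statement. Everything is driven by two linear-independence facts inside $C(\mathbb{X}_n^{\varepsilon})$: for a suitable column $k$ the generators $x_{1k},\dots,x_{nk}$ are linearly independent, and for suitable column indices $k_0,l_0$ the products $x_{pk_0}x_{ql_0}$ are linearly independent on the index set where they do not already vanish. Granting these, I would argue as follows. Applying $*$ to $\alpha'(x_{ik})=\sum_j u_{ij}\otimes x_{jk}$ and comparing coefficients gives $u_{ij}=u_{ij}^{*}$. Applying $\alpha'$ to $\sum_i x_{ik}^{2}=1$ and reading off the independent single-column products $x_{pk}x_{qk}$ yields $\sum_i u_{ip}u_{iq}=\delta_{pq}$, that is $u^{T}u=1$; the companion relation $uu^{T}=1$ comes from the same computation applied to $\beta'$.

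For the relations $\mathring R^{\varepsilon}$ the decisive device is to convert a relation between the row pair $(i,j)$ and the column pair $(k,l)$ of $u$ into a statement about a single coefficient. Choosing $m$ with $\varepsilon_{mm}=0$ and applying $\alpha'$ to $x_{im}x_{jm}$, which vanishes in $\mathbb{X}_n^{\varepsilon}$ when $\varepsilon_{ij}=1$, gives $\sum_{p,q}u_{ip}u_{jq}\otimes x_{pm}x_{qm}=0$; since the terms with $\varepsilon_{pq}=1$ vanish already, linear independence of the surviving products forces $u_{ik}u_{jl}=0$ whenever $\varepsilon_{kl}=0$. The commuting case $\varepsilon_{ij}=\varepsilon_{kl}=1$ and the remaining vanishing case $\varepsilon_{ij}=0,\varepsilon_{kl}=1$ are obtained in exactly the same way by applying $\alpha'$ (and $\beta'$ for the transposed statements) to $x_{ik_0}x_{jl_0}$ for a pair $k_0\neq l_0$ with $\varepsilon_{k_0l_0}=1$, where now the surviving terms are those with $\varepsilon_{pq}=1$.

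The hard part, exactly as in the complex case, is to produce the test representations of $C(\mathbb{X}_n^{\varepsilon})$ that certify these independences. I would build them from two model families: commutative representations given by classical points, namely real orthogonal matrices compatible with $\varepsilon$ whose columns are supported on sets of mutually $\varepsilon$-unrelated rows (reducing to signed permutation matrices in the cubic case), which witness the commuting directions $\varepsilon_{ij}=1$; and Fock-space or creation-operator models realizing each column as a free orthonormal family of self-adjoint elements subject to the vanishing relations $x_{pm}x_{qm}=0$ for $\varepsilon_{pq}=1$, which witness the free directions $\varepsilon_{ij}=0$ and keep the length-one and length-two words linearly independent. Combining column models by free products and taking direct sums, the structural analogue of the $\pi_{\mathbb{F}_2}$ and $\mathbb{M}_4(\mathbb{C})$ representations used for the complex spheres, should furnish enough separating functionals while respecting the mixed partial-commutation structure. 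Verifying that such representations genuinely exist for every $\varepsilon$, and that they separate precisely the products appearing in the three cases of $\mathring R^{\varepsilon}$, is the delicate and technical point on which the proof hinges.
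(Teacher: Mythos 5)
Your overall architecture --- verify the coactions exist, then push the defining relations of $\mathbb{X}_n^{\varepsilon}$ through $\alpha'$ and $\beta'$ and separate tensor legs by linear independence --- is the same as the paper's. But the proof is not closed: everything is made to rest on two global linear-independence claims (independence of a whole column $x_{1k},\dots,x_{nk}$, and independence of \emph{all} non-vanishing degree-two products $x_{pk_0}x_{ql_0}$ in a column pair), and you yourself defer the construction of the representations certifying these to ``the delicate and technical point on which the proof hinges.'' That is precisely the content of the theorem, so as written there is a genuine gap. Moreover the route you sketch for filling it (Fock-space column models combined by free products and direct sums, adapted to each $\varepsilon$) is considerably stronger than what is needed and would itself require a nontrivial verification that the mixed relations of $\mathbb{X}_n^{\varepsilon}$ are respected.

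The paper avoids all of this with three light devices. First, the self-adjointness of the $u_{ij}$ and the commutation $u_{ik}u_{jl}=u_{jl}u_{ik}$ for $\varepsilon_{ij}=\varepsilon_{kl}=1$ are obtained by restricting $\alpha'$, $\beta'$ to the diagonal subalgebra generated by $\{x_{ii}\}$, which satisfies exactly the relations of the real $\varepsilon$-sphere, and invoking the argument of \cite[Theorem 4.7]{speicherweber16epsilinqg}; no new independence statement is needed there, whereas your plan of extracting these relations from products $x_{ik_0}x_{jl_0}$ with $\varepsilon_{k_0l_0}=1$ requires yet another unproven independence claim. Second, the vanishing relations and the orthogonality are obtained not from full linear independence of a column's products but by isolating terms one or two at a time: the character $\pi_1$ (sending $x_{kk}\mapsto 1$ and the rest of the column to $0$) yields $u_{ik}u_{jk}=0$ and $\sum_i u_{ik}^2=1$ directly, and the surjection $\pi_{O_2^+}$ onto $C(O_2^+)$, mapping a $2\times 2$ corner with $\varepsilon_{kl}=0$ to the fundamental matrix $v$, isolates the remaining pair of terms via the known linear independence of $v_{11}v_{21}$ and $v_{21}v_{11}$. (Note that $\pi_{O_2^+}$ is only well defined when $\varepsilon_{kl}=0$, which is exactly the case where it is needed: for $\varepsilon_{kl}=1$ the relation $\sum_i u_{ik}u_{il}=0$ already follows term by term from $\mathring{R}^{\varepsilon}$ since $\varepsilon_{ii}=0$.) Third, the transposed cases are handled simply by running the same computation with $\beta'$. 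If you replace your hypothetical separating representations by these two concrete quotients and the reduction to the real case, your argument closes.
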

\begin{proof}
We first check that the actions $\alpha$ and $\beta$ are well-defined.
It is a standard argument to see that $\alpha(x_{ij})=\alpha(x_{ij})^{*}$
and $\sum_{i}\alpha(x_{ik})\alpha(x_{il})=\delta_{kl}$ using the
orthogonal relations of $\mathring{O}_{n}^{\varepsilon}$. Also, according
to the relations $\mathring{R}^{\varepsilon}$, for $\varepsilon_{ij}=1,\varepsilon_{kl}=1$,
\[
\alpha(x_{ik})\alpha(x_{jl})=\sum_{p,q:\varepsilon_{pq}=1}u_{ip}u_{jq}\otimes x_{pk}x_{ql}=\sum_{p,q:\varepsilon_{pq}=1}u_{jq}u_{ip}\otimes x_{ql}x_{pk}=\alpha(x_{jl})\alpha(x_{ik}),
\]
and for $\varepsilon_{ij}=0,\varepsilon_{kl}=1$,
\[
\alpha(x_{ik})\alpha(x_{jl})=\sum_{p,q:\varepsilon_{pq}=0}u_{ip}u_{jq}\otimes x_{pk}x_{ql}=0,
\]
and for $\varepsilon_{ij}=1,\varepsilon_{kl}=0$,
\[
\alpha(x_{ik})\alpha(x_{jl})=\sum_{p,q:\varepsilon_{pq}=1}u_{ip}u_{jq}\otimes x_{pk}x_{ql}=0.
\]
Thus $\alpha$ is a well-defined homomorphism. Similarly we see that
the action $\beta$ exists as well. 

Now assume that $\mathbb{G}$ is an arbitrary compact matrix quantum
group acting on $\mathbb{X}_{n}^{\varepsilon}$ by homomorphisms
\[
\alpha',\beta':C(\mathbb{X}_{n}^{\varepsilon})\to C(\mathbb{G})\otimes C(\mathbb{X}_{n}^{\varepsilon}),
\]
\[
\alpha'(x_{ik})=\sum_{j}u_{ij}\otimes x_{jk},\quad\beta'(x_{ik})=\sum_{j}u_{ji}\otimes x_{jk}.
\]
Note that the diagonal C{*}-subalgebra generated by $\{x_{ii}:1\leq i\leq n\}$
in $C(\mathbb{X}_{n}^{\varepsilon})$ satisfies the relations $x_{ii}x_{jj}=x_{jj}x_{ii}$
for $\varepsilon_{ij}=1$. Restricting the homomorphisms $\alpha'$
and $\beta'$ to this subalgebra, the similar arguments as in \cite[Theorem 4.7]{speicherweber16epsilinqg}
yield that the generators $u_{ij}$ are self-adjoint, and the relation
$u_{ik}u_{jl}=u_{jl}u_{ik}$ for $\varepsilon_{ij}=\varepsilon_{kl}=1$
still holds, for which we omit the details. Now consider the case
$\varepsilon_{ij}=1,\varepsilon_{kl}=0$. We have a priori
\begin{equation}
0=\alpha'(x_{ik}x_{jk})=\sum_{p,q=1}^{n}u_{ip}u_{jq}\otimes x_{pk}x_{qk}=\sum_{p,q:\varepsilon_{pq}=0}u_{ip}u_{jq}\otimes x_{pk}x_{qk},\label{eq:o}
\end{equation}
where we applied the relations $x_{pk}x_{qk}=0$ for $\varepsilon_{pq}=1$
since $\varepsilon_{kk}=0$. If $k=l$, it is easy to see that there
exists a homomorphism $\pi_{1}:C(\mathbb{X}_{n}^{\varepsilon})\to\mathbb{C}$
such that $\pi_{1}(x_{kk})=1$ and $\pi_{1}(x_{k'k})=0$ for $k'\neq k$.
Applying the homomorphism $\mathrm{id}\otimes\pi_{1}$, the equality
\eqref{eq:o} yields 
\begin{equation}
u_{ik}u_{jk}=0.\label{eq:o pf 0}
\end{equation}
If $k\neq l$, we consider the surjective homomorphism $\pi_{O_{2}^{+}}:C(\mathbb{X}_{n}^{\varepsilon})\to C(O_{2}^{+})$
such that
\[
\mathrm{id}_{\mathbb{M}_{2}}\otimes\pi_{O_{2}^{+}}\left(\begin{bmatrix}x_{kk} & x_{kl}\\
x_{lk} & x_{ll}
\end{bmatrix}\right)=\begin{bmatrix}v_{11} & v_{12}\\
v_{21} & v_{22}
\end{bmatrix},
\]
where $v$ is the usual defining matrix of $C(O_{2}^{+})$. Applying
the homomorphism $\mathrm{id}\otimes\pi_{1}$, the equality \eqref{eq:o}
yields
\[
u_{ik}u_{jl}\otimes v_{11}v_{21}+u_{il}u_{jk}\otimes v_{21}v_{11}+u_{ik}u_{jk}\otimes v_{11}^{2}+u_{il}u_{jl}\otimes v_{21}^{2}=0.
\]
Moreover, together with \eqref{eq:o pf 0}, we see that
\[
u_{ik}u_{jl}\otimes v_{11}v_{21}+u_{il}u_{jk}\otimes v_{21}v_{11}=0.
\]
It is well-known that $v_{1j}v_{2j}$ and $v_{21}v_{11}$ are linearly
independent (see for example a simple matrix model in \cite[Theorem 3.9]{banica17spherenote}).
Hence we have
\[
u_{ik}u_{jl}=u_{il}u_{jk}=0.
\]
Continuing the similar arguments for the action $\beta'$, we obtain
completely the relations $\mathring{R}^{\varepsilon}$. 

Now the orthogonal relations for $\mathbb{G}$ follows easily. Note
that we have $\sum_{i}x_{ik}^{2}=1$ for all $k$. Therefore
\begin{align*}
1\otimes1 & =\alpha'(\sum_{i}x_{ik}^{2})=\sum_{p\neq q}\sum_{i}u_{ip}u_{iq}\otimes x_{pk}x_{qk}+\sum_{p}\sum_{i}u_{ip}^{2}\otimes x_{pk}^{2}\\
 & =\sum_{p\neq q:\varepsilon_{pq}=0}\sum_{i}u_{ip}u_{iq}\otimes x_{pk}x_{qk}+\sum_{p}\sum_{i}u_{ip}^{2}\otimes x_{pk}^{2}.
\end{align*}
Using the homomorphism $\pi_{1}$ as above, we deduce that $\sum_{i}u_{ik}^{2}=1$,
and hence
\[
\sum_{p\neq q:\varepsilon_{pq}=0}\sum_{i}u_{ip}u_{iq}\otimes x_{pk}x_{qk}=0.
\]
For $k\neq l$ with $\varepsilon_{kl}=0$, we use the homomorphism
$\pi_{O_{2}^{+}}$ as above and we obtain $\sum_{i}u_{ik}u_{il}=0$.
And for $\varepsilon_{kl}=1$, we see from the relation $\mathring{R}^{\varepsilon}$
that $\sum_{i}u_{ik}u_{il}=\sum_{i}0=0$. Repeating the similar arguments
with the action $\beta'$, we prove that $u$ is orthogonal. The proof
is complete.
\end{proof}

\subsection*{Acknowledgement }

The author would like to thank Stefan Jung and Moritz Weber for helpful
discussions. He would also like to thank the anonymous referee for
his careful reading and valuable suggestions on the preprint version.
The author was funded by the ERC Advanced Grant on Non-Commutative
Distributions in Free Probability, held by Roland Speicher.

\end{document}